\newcommand{\mb}{\mathbb}
\newcommand{\mc}{\mathcal}
\newcommand{\Z}{\mb Z}
\newcommand{\bs}{\backslash}
\theoremstyle{plain}
\newtheorem{theorem}{Theorem}[section]
\newtheorem{lemma}[theorem]{Lemma}
\newtheorem{corollary}[theorem]{Corollary}
\newtheorem{proposition}[theorem]{Proposition}
 \theoremstyle{definition}
\newtheorem{definition}[theorem]{Definition}
\newtheorem{?}[theorem]{Problem}
\begin{document}

\title{On Functional Equations for the Elliptic Dilogarithm}

\author{Vasily Bolbachan}

\maketitle

\begin{abstract}
Let $E$ be an elliptic curve over an algebraically closed field of characteristic 0. We prove that the pre-Bloch group of the function field of $E$ can be generated by the functions of
degree not higher than 3. We apply this result to the elliptic dilogarithm function defined by S. Bloch. He has shown that any element of the pre-Bloch group gives a (so-called elliptic Bloch)
relation between the values of the so-called Elliptic dilogarithm. We conclude that any
elliptic Bloch relation can be reduced to the antisymmetry relation and the elliptic Bloch
relations for the functions of degree 3.

\end{abstract}

\section{Introduction}
\label{intro}
\subsection{Summary}

We recall that the Bloch-Wigner dilogarithm \cite{Bl1} is defined
 by the following formula:
$$D(z)=\Im Li_2(z)+\arg(1-z)\ln |z|.$$

This formula defines a real analytic function on $\mathbb C\backslash\{0,1\}$ which satisfies to the relation $D(z)=-D(z^{-1})$. 

Let $E=\mb C/\left<1,\tau\right>$ be an elliptic curve over $\mb C$. The  elliptic dilogarithm was defined by Spencer Bloch (\cite{Bl1}, see also \cite{zagier2000classical}). The equivalent representation is given by the following formula: 
$$D_\tau(\xi)=\sum\limits_{n=-\infty}^{\infty}D(e^{2\pi i \xi+2\pi i\tau n}).$$
This formula defines a real analytic function on $E$. From the relation $D(z)=-D(z^{-1})$ it is easy to deduce the following antisymmety relation:
\begin{equation}
\label{rel1}
D_\tau(\xi)+D_\tau(-\xi)=0.
    \end{equation}

Starting from here we assume that $E$ is defined over an arbitrary algebraically closed field $k$ of characteristic zero.
    
Denote by $\mathbb Z[E]$ a free abelian group generated by the points of $E$. For a point $z\in E$ we denote by $[z]$ the corresponding element in the group $\mathbb Z[E]$. When $k=\mathbb C$, the elliptic dilogarithm gives a well-defined map $\widetilde D_\tau\colon \mathbb Z[E]\to \mathbb C$ defined by the formula $\widetilde D_\tau([z])=D_\tau(z)$.

 For a rational function $g$ on some smooth algebraic curve, denote by $(g)$ its divisor. Let us formulate the so-called elliptic Bloch relations (\cite[Theorem 9.2.1]{Bl1}, see also \cite{zagier2000classical}). Let $f$ be a rational function on $E$ of degree $n$ such that
$$(f)=\sum\limits_{i=1}^{n}([\alpha_i]-[\gamma_i]),(1-f)=\sum\limits_{i=1}^{n}([\beta_i]-[\gamma_i]).$$
 Define the element $\eta_f\in \mathbb Z[E]$ by the following formula:
\begin{equation}
\label{rel2}
\eta_f=\sum\limits_{i,j=1}^{n}\left([\alpha_i-\beta_j]+[\beta_i-\gamma_j]+[\gamma_i-\alpha_j]\right).
    \end{equation}
The following definition is taken from \cite{GL}:

\begin{definition}
Define a subgroup $\mathcal R(E)$  of the group $\mathbb Z[E]$ generated by the following elements:
\begin{enumerate}
    \item $\eta_f$, where $f\in k(E)$,
    \item $[z]+[-z]$, where $z\in E$,
    \item  $2\cdot(z-\sum\limits_{2z'=z}[z'])$, where $z\in E$.
\end{enumerate}
\emph{The elliptic Bloch group} $B_3(E)$ is defined as the quotient $\mathbb Z[E]/\mathcal R(E)$.
\end{definition}

According to \cite{GL}, when $k=\mathbb C$, the map $\widetilde D_\tau$ is zero on $\mathcal R(E)$.

Consider the following geometric example (see \cite[Lemma 3.29]{GL}). Let us realize the elliptic curve $E$ as a cubic plane curve in $\mb P^2$ and consider three different lines $l,m,n\subset \mb P^2(\mb C)$ intersecting at a point in the complement of $E$. Let $h_l,h_m,h_n$ be homogeneous equations of these lines such that $h_m=h_n+h_l$. Denote by $\alpha_i,\beta_i,\gamma_i, i=1,2,3$ the intersection points of the lines $l,n,m$ with $E$. Denote the element $\eta_f$, where $f=h_l/h_m$, by $\eta_{l,m,n}$. This element has the following form:
\begin{equation}
\label{rel3}
\eta_{l,m,n}=\sum\limits_{i,j=1}^3\left([\alpha_i-\beta_j]+[\beta_i-\gamma_j]+[\gamma_i-\alpha_j]\right).
    \end{equation}

Our main result is the following statement:

\begin{theorem}
\label{thm:dilog}
Let $E$ be an elliptic curve over algebraically closed field of characteristic zero. For any rational function $f$ on $E$ the element $\eta_f$ can be represented as a linear combination with integer coefficient of the elements of the form $\eta_{l,m,n}$ and $[z]+[-z]$. 

This implies that when we defining the elliptic Bloch group, we can omit the elements $\eta_f$ with $\deg f>3$.
\end{theorem}

This theorem gives a solution of Conjecture 3.30 from \cite{GL}. We remark that this theorem does not follows directly from any of the statements mentioned in \cite{GL} and \cite{zagier2000classical}.

Denote by $K$ the field of rational functions on $E$. There are a lot of relations between the elements of the form $\eta_f$. More precisely, the following is true:

\begin{proposition}
\label{prop:motivation}
Let $f,g\in K\bs \{0,1\}$ and $f\ne g$. Modulo elements of the form $[z]+[-z]$, the following identity is true:
$$\eta_{f}+\eta_{g/f}+\eta_{(1-f)/(1-g)}=\eta_g+\eta_{(1-f^{-1})/(1-g^{-1})}.$$
\end{proposition}

This proposition motivates the following definition:

\begin{definition}[the pre-Bloch group] Denote by $\Z[K\bs \{0,1\}]$ the free abelian group generated by the set $K\bs\{0,1\}$. We  denote these  generators  as  $[f]$,  where $f\in K\bs\{0,1\}$. We define {\it the pre-Bloch group} of the field $K$ as the quotient of the group $\Z[K\bs \{0,1\}]$ by the following elements(so-called Abel five-term relations)
\begin{equation}
\label{five-relation}
[x]-[y]+[y/x]+\left[(1-x)/(1-y)\right]-\left[\dfrac {1-x^{-1}}{1-y^{-1}}\right],
    \end{equation}
where $x,y\in K\backslash\{0,1\}, x\ne y$.
\end{definition}

We recall that {\it degree} of a non-zero rational function $f\in K$ is the number of zeros of $f$ counting with multiplicities. (In particular, degree of any non-zero constant is equal to $0$.) Theorem \ref{thm:dilog} is an easy consequence of the following statement:

\begin{theorem}
\label{thm:main}
Let $E$ be an elliptic curve over algebraically closed field of characteristic $0$. The group $B_2(K)$ is generated by elements of the form $[f]$ with $\deg f\leq 3$.
\end{theorem}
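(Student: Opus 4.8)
The plan is to show that every generator $[f]$ with $\deg f \geq 4$ can be rewritten, modulo five-term relations, as an integer combination of $[g]$'s of strictly smaller degree, and then induct on the degree. The key idea is to exploit the five-term relation \eqref{five-relation} itself: given a high-degree function $f$, I want to choose an auxiliary function $y$ so that in the relation $[f] - [y] + [y/f] + [(1-f)/(1-y)] - [(1-f^{-1})/(1-y^{-1})]$ all the terms other than $[f]$ have degree smaller than $\deg f$. Since the divisors of $f$ and $1-f$ on an elliptic curve are constrained only by the condition that they have degree $0$ and the same polar part, there is considerable freedom; the hope is that a generic rational function $y$ of small degree, or one built to partially cancel the zeros and poles of $f$, will do the job.

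The main technical tool I expect to use is a careful divisor analysis on $E$. Write $(f) = \sum ([\alpha_i] - [\gamma_i])$. To make $[y/f]$ have small degree, one wants $y$ to share zeros and poles with $f$; to make $[(1-f)/(1-y)]$ small, one wants $1-y$ to share zeros and poles with $1-f$. These two demands pull in different directions, so the real content is a counting argument showing that the degree \emph{drops} on balance — i.e.\ that one can arrange
\[
\deg(y) + \deg(y/f) + \deg\big((1-f)/(1-y)\big) + \deg\Big(\tfrac{1-f^{-1}}{1-y^{-1}}\Big) < 4\deg(f)
\]
while keeping each individual term below $\deg(f)$. A convenient special case worth isolating first: if $f$ has a zero and a pole that are "close" in the group law (or if $\deg f$ is composite and $f$ factors through an isogeny or a lower-degree map), one can often split $f$ directly. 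The general argument will likely need the Riemann–Roch theorem to guarantee the existence of the auxiliary function $y$ with prescribed behavior at a controlled number of points.

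I would carry out the steps as follows. First, reduce Theorem \ref{thm:dilog} to Theorem \ref{thm:main} (the excerpt already asserts this is easy — the point is that \eqref{rel2} is a function of the class $[f] \in B_2(K)$, additive in the five-term relation, so expressing $[f]$ via degree-$\leq 3$ generators expresses \eqref{rel2} via \eqref{rel3} and, for the antisymmetry, via \eqref{rel1}; this uses that $[f^{-1}] = -[f]$ and $[1-f]$-type identities hold in $B_2$, which themselves follow from five-term relations, and these produce precisely antisymmetry relations for $D_\tau$). Second, set up the induction on $\deg f$ with base cases $\deg f \leq 3$ trivial. Third — the heart of the matter — given $\deg f = n \geq 4$, construct $y$ and verify the degree drop. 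Fourth, handle possible degenerate configurations where the generic choice of $y$ fails (e.g.\ when forcing $1-y$ to vanish where $1-f$ does would force $y$ to vanish where $f$ does, creating a conflict), perhaps by a perturbation or by treating such $f$ separately.

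The step I expect to be the main obstacle is the third: proving that the four "error terms" in the five-term relation can be \emph{simultaneously} kept below degree $n$. A naive generic $y$ of degree $1$ or $2$ makes $[y]$ small but typically makes $[y/f]$, $[(1-f)/(1-y)]$ have degree around $n+O(1)$, which is worse. So one must genuinely use the flexibility in the divisors of $f$ and $1-f$ — possibly iterating the five-term relation several times, or first replacing $f$ by a more convenient representative in its $B_2$-class — and it is not obvious a priori that the combinatorics of zeros/poles permits a net decrease. I anticipate that the decomposition $[f]$ in terms of three lines (as in the geometric example with $h_l/h_m$) is the right target shape, and that the proof amounts to showing any $f$ can be "triangulated" into such line-configurations via repeated five-term moves.
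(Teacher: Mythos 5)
You have correctly identified the overall strategy (induct on degree, and use the five-term relation with an auxiliary function chosen so that every term other than $[f]$ drops in degree), which is indeed the paper's strategy; but your proposal stops exactly at the step you yourself flag as the main obstacle, and that step is the entire content of the theorem. The paper's resolution is concrete: for $f$ of degree $n+1\ge 4$ in \emph{generic position} (two zeros $\alpha_1,\alpha_2$, two poles $\gamma_1,\gamma_2$, two distinct points $\beta_1,\beta_2$ with $f(\beta_i)=1$, subject to mild non-coincidence conditions), one constructs an auxiliary $g$ of degree $\le 3$ whose divisor contains $[\alpha_1]+[\alpha_2]-[\gamma_1]-[\gamma_2]$ (possibly degenerating) and which moreover satisfies $g(\beta_1)=g(\beta_2)=1$; then each of $g$, $f/g$, $(1-f)/(1-g)$, $(1-f^{-1})/(1-g^{-1})$ has degree $\le n$, because the shared zeros, shared poles and shared $1$-points cancel pairs in the respective divisors. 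The existence of such a $g$ is not a formal consequence of Riemann--Roch, which is what your outline implicitly leans on: prescribing a degree-$2$ function's two zeros and one pole on $E$ already determines it up to a scalar, so one cannot additionally impose its values at two more points. The paper gets around this by taking $g$ to be a product of two degree-$2$ functions and proving an interpolation statement (Proposition \ref{prop:degree_2}): for any four distinct points of $E$ and any four distinct values in $\mb P^1$ there is a degree-$2$ function realizing them. Its proof is genuinely elliptic, using the function $h_{\alpha,\beta,\gamma,\delta}(z)=[\wp(z-\alpha),\wp(z-\beta),\wp(z-\gamma),\wp(z-\delta)]$ and its invariance under translation by $E[2]$ to locate a suitable centre $\mu$, after which a M\"obius transformation matches the prescribed cross-ratio. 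Nothing in your proposal supplies this construction, and without it the claimed simultaneous degree drop is unproved.

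A second gap is the treatment of degenerate configurations, which you defer to ``a perturbation or treating such $f$ separately.'' One cannot perturb $f$ itself, since it is the class $[f]$ that must be expressed; the paper's reduction to the generic case is a concrete move inside $B_2(K)$: substitute a constant $a$ together with $f$ into the five-term relation, so that $[f]$ is rewritten through $[f/a]$, $[(1-a)/(1-f)]$, $[(1-a^{-1})/(1-f^{-1})]$, and check that for all but finitely many $a$ each of these is either generic or already lies in $\mc F_n B_2(K)$ (Proposition \ref{prop:sp} and Lemma \ref{lemma:sp}). Finally, the inequality you display (a bound on the \emph{sum} of the four auxiliary degrees, below $4\deg f$) is not the relevant condition: what is needed, and what the shared zeros, poles and $1$-points actually deliver, is that each term other than $[f]$ individually has degree $\le n$.
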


The corresponding statement for the projective line is the main result of \cite{dupont1982generation}. It is interesting to generalise this result to curves of arbitrary genus.

\subsection{The Plan of Proofs} 
In the second section we deduce Theorem \ref{thm:dilog} from Theorem \ref{thm:main}.

Let us denote by $\mc F_n B_2(K)$ the subgroup of $B_2(K)$ generated by elements of the form $[f]$ with $\deg f\leq n$.
In the third section we will  define the notion {\it a generic} function and prove the following 

\begin{proposition}
\label{prop:sp}
Let $n\geq 3$. The group $\mc F_{n+1}B_2(K)$ is generated by the subgroup $\mc F_n B_{2}(K)$ and generic rational functions of degree $n+1$.
\end{proposition}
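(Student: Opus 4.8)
The plan is to argue by induction on a measure of non-genericity of $f$, using the defining five-term relation \eqref{five-relation} to trade a non-generic degree-$(n+1)$ function for a combination of functions that are either of degree $\le n$ or of degree $n+1$ but strictly closer to generic. Concretely, given $f$ with $\deg f=n+1$ that is not generic, I would apply \eqref{five-relation} with $x=f$ and $y=g$ for a judiciously chosen rational function $g$ of degree $2$ or $3$. Since $n\ge 3$, the term $[g]$ then lies in $\mathcal F_{3}B_2(K)\subseteq\mathcal F_nB_2(K)$ and can be discarded. The three remaining companions $[g/f]$, $\bigl[(1-f)/(1-g)\bigr]$ and $\bigl[(1-f^{-1})/(1-g^{-1})\bigr]$ will be arranged, by prescribing the divisor of $g$ together with the value of its leading scalar, to have degree at most $n+1$: the companions that can be pushed below degree $n+1$ fall into $\mathcal F_nB_2(K)$, while the remaining degree-$(n+1)$ companions are forced to be less degenerate than $f$.

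To make the induction precise I would attach to $f$ the points $\alpha_i,\beta_i,\gamma_i$ ($i=1,\dots,n+1$) given by $\operatorname{div}(f)=\sum([\alpha_i]-[\gamma_i])$ and $\operatorname{div}(1-f)=\sum([\beta_i]-[\gamma_i])$, and define a non-genericity measure $\mu(f)\in\mathbb Z_{\ge 0}$: first the number of coincidences among these $3(n+1)$ points (note that cross-type coincidences are automatically excluded — $\alpha_i\ne\beta_j$ since $f$ cannot be both $0$ and $1$, and $\alpha_i\ne\gamma_j$, $\beta_i\ne\gamma_j$ since a zero would cancel a pole — so only coincidences of like type can occur), and then, on the stratum where these points are distinct, the number of \emph{unexpected} relations they satisfy in the group law of $E$ beyond the forced Abel relations $\sum\alpha_i=\sum\beta_i=\sum\gamma_i$; combine these lexicographically so that $\mu(f)=0$ exactly when $f$ is generic. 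For $\mu(f)>0$ one isolates a single witnessing degeneracy — a collision $\gamma_i=\gamma_j$, or $\alpha_i=\alpha_j$, or $\beta_i=\beta_j$, or an unexpected partial-sum relation — and builds $g$ by drawing its zeros and poles from the supports of $\operatorname{div}(f)$ and $\operatorname{div}(1-f)$ in a pattern tailored to that degeneracy, so that exactly the wanted cancellations occur in the divisors of the three companions. Applying \eqref{five-relation} and summing over the finitely many cases then expresses $[f]$ modulo $\mathcal F_nB_2(K)$ as an integral combination of classes $[h]$ with $\deg h=n+1$ and $\mu(h)<\mu(f)$, and the induction closes; the base case $\mu=0$ is the assertion that $[f]$ is itself one of the listed generators.

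The main obstacle is the case analysis in the reduction step: for each type of degeneracy one must exhibit a $g$ for which all four companions simultaneously have degree $\le n+1$ and the surviving degree-$(n+1)$ companions have strictly smaller non-genericity. Controlling the degrees forces $g$ to share several zeros and poles with $f$ and with $1-f$, which is what pushes $\deg g$ up to $3$ in the worst case — still inside $\mathcal F_nB_2(K)$ precisely because $n\ge 3$ — and which also requires enough points in $\operatorname{div}(f)$ to be available, i.e. $n+1\ge 4$; this is where the hypothesis $n\ge 3$ is genuinely used. The delicate part is verifying that $\mu$ really drops: the divisor of each surviving companion is assembled from fragments of $\operatorname{div}(f)$, $\operatorname{div}(1-f)$ and the few new points introduced by $g$ (its free zero when $\deg g=3$, and the two zeros of $1-g$, which move as the leading scalar of $g$ varies), so one must check that no old degeneracy is inherited and no new one is created. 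Since the ground field is algebraically closed, hence infinite, the remaining parameters of $g$ can be chosen to avoid the finitely many proper subvarieties on which such a degeneracy would appear, and a dimension count shows that this avoidance is always possible.
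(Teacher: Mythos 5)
Your proposal is a plan rather than a proof, and the part it leaves open is exactly the hard part. To make all three companions $[g/f]$, $\bigl[(1-f)/(1-g)\bigr]$, $\bigl[(1-f^{-1})/(1-g^{-1})\bigr]$ have degree at most $n+1$ you must produce $\deg g$ cancellations in each of the three difference divisors, which forces $g$ to have its zeros and poles at prescribed points of $\operatorname{div}(f)$ \emph{and} to take the value $1$ at prescribed points of $f^{-1}(1)$. But a degree-$2$ function on $E$ with two prescribed zeros and one prescribed pole is unique up to a scalar (the remaining pole is forced by Abel's relation), and the scalar is then pinned by a single value-$1$ condition; so there are no free parameters left, and your final appeal to ``choose the remaining parameters of $g$ to avoid finitely many proper subvarieties'' has nothing to vary: the zeros of $1-g$ are whatever they are, and moving the scalar destroys the very cancellations you needed. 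Worse, whether such a $g$ exists at all with the required interpolation properties is precisely what the genericity conditions 3--4 of Definition \ref{def:general} guarantee (via Proposition \ref{prop:degree_2} and Corollary \ref{cor:degree_3}), and your induction operates exactly where those conditions fail; for instance it is not clear how your scheme improves an $f$ whose fibre over $1$ is a single point of multiplicity $n+1$, since no $g$ assembled from the supports of $\operatorname{div}(f)$ and $\operatorname{div}(1-f)$ obviously repairs that fibre. (A smaller inaccuracy: $\mu(f)=0$ is strictly stronger than genericity, which only asks for one good choice of six points; this just makes your strata finer, but it shows the measure was not calibrated to the actual definition.)

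The paper sidesteps this entire case analysis with a trick you did not consider: substitute a \emph{constant} $x=a$, $y=f$ into \eqref{five-relation}. Then $[a]$ has degree $0$, while the companions $f/a$, $(1-a)/(1-f)$, $(1-a^{-1})/(1-f^{-1})$ have degree $n+1$ but their fibres over $\{0,1,\infty\}$ are, up to permutation, $f^{-1}(0)$, $f^{-1}(a)$, $f^{-1}(1)$, $f^{-1}(\infty)$; here $a$ really is a free parameter, so for all but finitely many $a$ (avoiding critical values and one further point) each companion satisfies conditions 1, 2 and 4, hence is either generic or, when condition 3 fails, falls under Lemma \ref{lemma:sp} -- the single degenerate case that is treated by a tailored auxiliary function, and only under hypotheses that are then available. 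To salvage your induction you would have to exhibit, for every degeneracy type, an explicit $g$ together with a verification that the surviving degree-$(n+1)$ companions are strictly less degenerate; that is the entire difficulty, and the constant-substitution argument makes it unnecessary.
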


The main result of the fourth section is the following  

\begin{proposition}
\label{prop:ge}
Let $n\geq 3$. Let $f$ be a generic function of degree $n+1$. Then the element $[f]$ lies in the subgroup $\mc F_n B_2(K)$.
\end{proposition}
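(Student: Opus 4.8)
The plan is to derive $[f]\in\mc F_n B_2(K)$ from a single Abel five-term relation \eqref{five-relation} with $x=f$ and a carefully chosen auxiliary function $y$ of degree exactly $3$. Write $(f)_0=\sum_{i=1}^{n+1}(\alpha_i)$ and $(1-f)_0=\sum_{i=1}^{n+1}(\beta_i)$ for the divisors of zeros of $f$ and of $1-f$, and $(f)_\infty=(1-f)_\infty=\sum_{i=1}^{n+1}(\gamma_i)$ for their common divisor of poles; genericity of $f$ guarantees that these $3(n+1)$ points are distinct, and the labelling is at our disposal. I will look for a function $y$ of degree $3$ with
\[
(y)_0=(\alpha_1)+(\alpha_2)+(P),\qquad (y)_\infty=(\gamma_1)+(\gamma_2)+(Q),\qquad (1-y)_0=(\beta_1)+(\beta_2)+(R)
\]
for suitable $P,Q,R\in E$. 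The first two divisors determine $y$ up to a scalar once $P,Q$ satisfy the relation $\alpha_1+\alpha_2+P=\gamma_1+\gamma_2+Q$ (in the group law of $E$) forced by principality, and the third divisor --- the fibre $y^{-1}(1)$ --- then depends on the residual scalar.

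Granting for the moment that such a $y$ exists with $P,Q,R$ in general position relative to the $\alpha_i,\beta_i,\gamma_i$, a direct divisor computation shows that in each of $y/f$, $(1-f)/(1-y)$ and $\tfrac{1-f^{-1}}{1-y^{-1}}$ exactly four of the distinguished points cancel: the pair $\{\alpha_1,\alpha_2\}$ in $y/f$ and in $\tfrac{1-f^{-1}}{1-y^{-1}}$, the pair $\{\gamma_1,\gamma_2\}$ in $y/f$ and in $(1-f)/(1-y)$, and the pair $\{\beta_1,\beta_2\}$ in $(1-f)/(1-y)$ and in $\tfrac{1-f^{-1}}{1-y^{-1}}$. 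Hence each of these three functions has degree $(n+1)+3-4=n$, while $\deg y=3\le n$ since $n\ge3$. As all five functions involved are non-constant and $f\ne y$ (their degrees differ), relation \eqref{five-relation} applies, and, rearranged, it reads
\[
[f]=[y]-[y/f]-\bigl[(1-f)/(1-y)\bigr]+\Bigl[\tfrac{1-f^{-1}}{1-y^{-1}}\Bigr],
\]
so that $[f]\in\mc F_n B_2(K)$.

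To construct $y$, fix $Q$ in a dense open subset of $E$ and set $P:=\gamma_1+\gamma_2+Q-\alpha_1-\alpha_2$ (group law of $E$); then $(\alpha_1)+(\alpha_2)+(P)-(\gamma_1)-(\gamma_2)-(Q)$ is a principal divisor, defining a function $y_Q$ of degree $3$, unique up to a scalar. The ratio $y_Q(\beta_1)/y_Q(\beta_2)$ does not depend on the scalar, so $Q\mapsto y_Q(\beta_1)/y_Q(\beta_2)$ is a well-defined rational map $E\to\mb P^1$; if it is non-constant it is surjective, and then any $Q$ lying both over $1$ and in the dense open set above yields, after rescaling $y_Q$ so that $y_Q(\beta_1)=y_Q(\beta_2)=1$, the desired function $y$, with $R$ the third point of $(1-y)_0$.

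The step I expect to be the main obstacle is exactly this construction: one must show, for generic $f$, that the rational map $Q\mapsto y_Q(\beta_1)/y_Q(\beta_2)$ is non-constant and that its fibre over $1$ contains a point $Q$ for which $P,Q,R$ are in general position with respect to all the $\alpha_i,\beta_i,\gamma_i$. Each of these is the non-vanishing of an explicit regular function on the parameter space of degree-$(n+1)$ functions, so it suffices to produce a single degree-$(n+1)$ function realising all of them; this is precisely the non-degeneracy that the notion of a generic function is meant to provide (cf.\ Proposition \ref{prop:sp}), and the careful bookkeeping of these finitely many conditions is the technical heart of the proof. The hypothesis $n\ge3$ enters only through the inequality $\deg y=3\le n$, and it cannot be dropped: for $n=2$ one would be reducing degree-$3$ functions, which are genuinely new generators of $B_2(K)$ --- they already occur in the geometric relations \eqref{rel3}.
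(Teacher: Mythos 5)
Your overall strategy is the same as the paper's: play one Abel five-term relation off against an auxiliary function of degree at most $3$ that takes the value $1$ at $\beta_1,\beta_2$, vanishes at $\alpha_1,\alpha_2$ and has poles at $\gamma_1,\gamma_2$, and then check by divisor arithmetic that the remaining three terms have degree $\leq n$. That degree bookkeeping is fine (in fact your ``general position'' of $P,Q,R$ is not needed: any extra coincidence only lowers the degrees further, which is why the paper is happy with the degenerate cases 2 and 3 of Corollary \ref{cor:degree_3}; likewise your claim that genericity makes all $3(n+1)$ points distinct misreads Definition \ref{def:general}, which allows $\alpha_1=\alpha_2$ or $\gamma_1=\gamma_2$ with multiplicity, but this does not harm the estimates). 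The genuine gap is exactly the step you flag and then leave unproven: the existence of the auxiliary function $y$. That existence statement is the actual content of the paper's Section 4 --- Lemma \ref{lemma_about_h}, Proposition \ref{prop:degree_2} (a degree-$2$ function with four prescribed values at four prescribed points, built from cross-ratios of translated $\wp$-functions) and Corollary \ref{cor:degree_3} --- so omitting it omits essentially all of the work behind Proposition \ref{prop:ge}.

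Moreover, the route you sketch for closing the gap is not adequate as stated. You reduce existence to the non-constancy of the map $Q\mapsto y_Q(\beta_1)/y_Q(\beta_2)$ and then propose to verify it by exhibiting a single degree-$(n+1)$ function in a parameter space, ``the non-degeneracy that the notion of a generic function is meant to provide.'' But ``generic'' in this paper is not a Zariski-open condition to be adjusted at will: it is the explicit list of four conditions in Definition \ref{def:general}, and Proposition \ref{prop:ge} must be proved for every $f$ satisfying exactly those conditions, since Proposition \ref{prop:sp} produces precisely such functions. So the non-constancy has to be deduced from those conditions. It can be: writing $y_Q$ via sigma-functions (or any algebraic substitute), the ratio $y_Q(\beta_1)/y_Q(\beta_2)$, as a function of $Q$, has zeros at $Q=\beta_2$, $Q=\beta_1-c$ and poles at $Q=\beta_1$, $Q=\beta_2-c$ with $c=\gamma_1+\gamma_2-\alpha_1-\alpha_2$, hence is constant only when $c=0$, i.e. $\alpha_1+\alpha_2=\gamma_1+\gamma_2$, which condition 3 of Definition \ref{def:general} excludes (one must also check that a point of the fibre over $1$ avoids $\{\beta_1,\beta_2\}$ and keeps $\beta_1,\beta_2$ off the divisor of $y_Q$). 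None of this is in your write-up, and no appeal to a parameter-space argument can replace it; as it stands the proof is incomplete at its technical heart, which the paper fills by the explicit interpolation construction of Proposition \ref{prop:degree_2} and Corollary \ref{cor:degree_3}.
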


Theorem \ref{thm:main} follows immediately from these two propositions.

\section*{Acknowledgements}
The author is grateful to his supervisor Andrey Levin for setting the problem and useful discussions. 

This paper was partially supported by the Basic Research Program at the HSE University and by the Moebius Contest Foundation for Young Scientists

\section{The Deduction of Theorem \ref{thm:dilog} from Theorem \ref{thm:main}}
  Denote by $\mb Z[E]^-$ the quotient of $\mb Z[E]$ by the subgroup generated by the elements of the form $[\xi]+[-\xi]$, where $\xi\in E$. Denote by $\pi\colon \mathbb Z[E]\to\mathbb Z[E]$ the natural projection. Introduce the map $\beta\colon \Lambda^{2}K^{\times}\to \Z[E]^-$ defined by the formula $f\otimes g\mapsto \sum_{i,j}n_im_j\pi([x_i-y_j])$, where $\sum_i n_i[x_i], \sum_j m_j[y_j]$ are the divisors of the functions  $f$ and $g$. Denote by $\mc R_{as}$ the subgroup of $\mb Z[E]^-$ generated by the elements of the form $\beta(f\wedge (1-f))$ where $f\in K(E)$.   According to \cite[Lemma 1.1]{S} there is the well-defined map $\delta\colon B_2(K)\to\Lambda^2 K^{\times}$, given by the formula $\delta([f])=f\wedge (1-f)$. (It is non-trivial to prove that this map is zero on an Abel five-term relations (\ref{five-relation})). By the definition, the image of the map $\beta\circ \delta\colon B_2(K)\to \mb Z[E]^-$ coincides with $\mc R_{as}$. According to Theorem \ref{thm:main} the group $B_2(K)$ is generated by elements of the form $[f]$ with $\deg f\leq 3$.  We have proved the following statement:

\begin{corollary}
The group $\mc R_{as}$ is generated by elements of the form $\beta(f\wedge(1-f))$ with $\deg f\leq 3$. 
\end{corollary}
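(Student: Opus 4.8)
The plan is to deduce the corollary as a purely formal consequence of Theorem \ref{thm:main}, exactly in the spirit of the preceding paragraph. First I would recall the composite homomorphism $\beta\circ\delta\colon B_2(K)\to\mb Z[E]^-$ together with its defining property $(\beta\circ\delta)([f])=\beta\bigl(f\wedge(1-f)\bigr)$ for every $f\in K\bs\{0,1\}$. Since the symbols $[f]$ generate $B_2(K)$ and a group homomorphism carries a generating set of its domain onto a generating set of its image, the subgroup $\im(\beta\circ\delta)$ is generated by all $\beta(f\wedge(1-f))$, $f\in K\bs\{0,1\}$; by the definition of $\mc R$ this image is precisely $\mc R$.

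Next I would invoke Theorem \ref{thm:main}: the elements $[f]$ with $\deg f\leq 3$ already generate $B_2(K)$. Applying $\beta\circ\delta$ and again using that homomorphisms send generating sets to generating sets of the image, I conclude that $\mc R=\im(\beta\circ\delta)$ is generated by the elements $(\beta\circ\delta)([f])=\beta(f\wedge(1-f))$ with $\deg f\leq 3$, which is exactly the assertion of the corollary.

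I do not expect any genuine obstacle in this last step: all the substance is contained in Theorem \ref{thm:main} (and, upstream of it, in Propositions \ref{prop:sp} and \ref{prop:ge}), together with the already-cited fact (\cite[Lemma 1.1]{S}) that $\delta$ is well defined on the pre-Bloch group, and the elementary verification that $\beta$ is well defined on $\Lambda^2 K^{\times}$ (bilinearity is clear, and the diagonal terms vanish in $\mb Z[E]^-$ because $\sum_i n_i=0$ forces $\sum_i n_i^2$ to be even). The one point worth flagging is that $\beta\circ\delta$ need not be injective, so the corollary describes $\mc R$ only up to the kernel of this map; but since the statement concerns a set of generators of $\mc R$, surjectivity onto $\mc R$ is all that is required, and that is immediate.
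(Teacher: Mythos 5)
Your argument is correct and is essentially the paper's own proof: the image of the homomorphism $\beta\circ\delta$ is $\mc R$ by definition, and applying Theorem \ref{thm:main} together with the fact that a homomorphism sends generators to generators of its image gives the corollary. The extra remarks (non-injectivity being irrelevant, well-definedness of $\beta$ on $\Lambda^2 K^{\times}$) are accurate but not needed beyond what the paper already records.
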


It is easy to see that the element $\pi(\eta_f)$  is equal to $\beta(f\wedge(1-f))$. Now Proposition \ref{prop:motivation} follows from the fact that the map $\delta$ is zero on the Abel five-term relations (\ref{five-relation}).

Let us prove Theorem \ref{thm:dilog}. According to the previous corollary the group $\mc R_{as}$ is generated by elements of the form $\beta(f\wedge(1-f))$ with $\deg f\leq 3$. It follows that the element $\eta_f$  can be represented as a sum of elements $\eta_{f'}$ with $\deg f'\leq 3$ and elements of the form $[z]+[-z]$. So it is enough to prove that the theorem holds for  functions of degree $\leq 3$. If $\deg f\leq 2$ then it is easy to see that the element $\eta_f$ is a sum of elements of the form $[z]+[-z]$. Let $\deg f=3$. Since the map $\beta$ is translation-invariant and any point on $E$ is $3$-divisible, we can assume that the sum of the zeros of $f$ is equal to zero. It is easy to see that in this case the element $\eta_f$ is equal to $\eta_{l,m,n}$ for some lines $l,m,n$.

\section{The Reduction to the Generic Case}
Let us fix some algebraically closed field $k$ of characteristic zero. Starting from here we assume that $E$ is an arbitrary elliptic curve defined over $k$.

We need the following
\begin{definition}\label{def:general} 
A rational function $f$ on elliptic curve $E$ is called a {\it generic} if there are points $\alpha_1,\alpha_2\in f^{-1}(0), \beta_1,\beta_2\in f^{-1}(1), \gamma_1,\gamma_2\in f^{-1}(\infty)$ on $E$, such that the following conditions hold
\begin{enumerate}
 \item If the points $\alpha_1,\alpha_2$ are equal then the multiplicity of zero of the function $f$ at the point $\alpha_1$ is at least $2$. In the same way if the points $\gamma_1,\gamma_2$ are equal then the multiplicity of pole of the function $f$ at the point $\gamma_1$ is at least $2$,
 \item The points $\beta_1,\beta_2$ are different from each other,
 \item The points $\alpha_1+\alpha_2,\beta_1+\beta_2$ and $\gamma_1+\gamma_2$ are mutually different,
 \item The points $\alpha_1+\alpha_2-\gamma_1-\beta_1,\alpha_1+\alpha_2-\gamma_1-\beta_2$ are non-zero.
 \end{enumerate} 
\end{definition} 

In order to prove Proposition \ref{prop:sp} we need the following

\begin{lemma}
\label{lemma:sp}
Let $n\geq 3$. Let $f$ be a function of degree $n+1$ satisfying the first two conditions of Definition \ref{def:general} and the third condition fails.  Then $[f]\in \mc F_n B_2(K)$.
\end{lemma}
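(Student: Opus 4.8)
\emph{Proof plan.} The plan is to apply the Abel five-term relation \eqref{five-relation} with $x = f$ and a well-chosen auxiliary function $y$ of degree $2$. Since $\deg y = 2 \le 3 \le n$, the class $[y]$ lies in $\mc F_n B_2(K)$, so it will be enough to arrange that the other three terms $[y/f]$, $\bigl[(1-f)/(1-y)\bigr]$ and $\bigl[(1-f^{-1})/(1-y^{-1})\bigr]$ also have degree $\le n$; the relation then reads $[f] = [y]-[y/f]-\bigl[(1-f)/(1-y)\bigr]+\bigl[(1-f^{-1})/(1-y^{-1})\bigr]$ in $B_2(K)$, giving $[f]\in\mc F_nB_2(K)$. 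Along the way one checks that $y \ne 0,1$, that $y \ne f$ (because $\deg y = 2 < n+1 = \deg f$), and that the three quotient functions are non-constant, so that the relation genuinely applies.

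First I would record the divisors. Write $(f) = A - C$ and $(1-f) = B - C$ with $A,B,C$ effective of degree $n+1$; by condition~(1) one may take $A = [\alpha_1]+[\alpha_2]+A'$ and $C = [\gamma_1]+[\gamma_2]+C'$, by condition~(2) one has $B = [\beta_1]+[\beta_2]+B'$ with $\beta_1\ne\beta_2$, and $A',B',C'$ are effective of degree $n-1$. For a candidate $y$ with $(y) = P - Q$ ($P,Q$ effective of degree $2$) and with $R$ the divisor of zeros of $1-y$, computing the divisors of the three quotients and using that zeros and poles of $f$, of $1-f$, of $y$ and of $1-y$ are pairwise disjoint in the relevant combinations gives
\[
\deg(y/f) = n+3-a-c,\qquad \deg\frac{1-f}{1-y} = n+3-b-c,\qquad \deg\frac{1-f^{-1}}{1-y^{-1}} = n+3-a-b,
\]
where $a,b,c$ are the numbers of common zeros, counted with multiplicity, of the pairs $(P,A)$, $(R,B)$ and $(Q,C)$ respectively. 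So it will suffice to choose $y$ with two of $a,b,c$ equal to $2$ and the third at least $1$.

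The failure of condition~(3) is exactly what makes this possible, since it yields a linear equivalence between two of the degree-$2$ divisors $[\alpha_1]+[\alpha_2]$, $[\beta_1]+[\beta_2]$, $[\gamma_1]+[\gamma_2]$. If $\alpha_1+\alpha_2 = \gamma_1+\gamma_2$, I would take $y$ with $(y) = [\alpha_1]+[\alpha_2]-[\gamma_1]-[\gamma_2]$, normalised so that $y(\beta_1) = 1$; then $a = c = 2$ and $b \ge 1$. If $\beta_1+\beta_2 = \gamma_1+\gamma_2$, take $v$ with $(v) = [\beta_1]+[\beta_2]-[\gamma_1]-[\gamma_2]$, normalised so that $v(\alpha_1) = 1$, and set $y = 1 - v$; then $b = c = 2$ and $a \ge 1$. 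If $\alpha_1+\alpha_2 = \beta_1+\beta_2$, take $y$ of degree $2$ with $(y)_0 = [\alpha_1]+[\alpha_2]$ and $(y)_\infty = [\gamma_1]+[q]$ where $q = \alpha_1+\alpha_2-\gamma_1$, normalised so that the zero divisor of $1-y$ is $[\beta_1]+[\beta_2]$ (possible because $[\beta_1]+[\beta_2]$ belongs to the pencil $\bigl|[\alpha_1]+[\alpha_2]\bigr|$ and is distinct from $(y)_0$ and $(y)_\infty$); then $a = b = 2$ and $c \ge 1$. In every case the degree inequalities of the previous paragraph hold, and the five-term relation gives the claim.

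The part that will require genuine care is the multiplicity bookkeeping behind the assertions ``$a = 2$'' and ``$c = 2$'': when $\alpha_1 = \alpha_2$ (resp.\ $\gamma_1 = \gamma_2$) the divisor $P$ (resp.\ $Q$) has a double point there, and it is condition~(1) that guarantees $(f)$ has a matching double zero (resp.\ pole), so that the cancellation really is of order $2$; condition~(2) plays the same role for $R$ against $B$. The remaining points — existence and non-degeneracy of $y$ and $v$ via Abel's theorem and Riemann--Roch on $E$, the vanishing of the spurious common factors in the divisor computations, and non-constancy of the three quotients — are routine.
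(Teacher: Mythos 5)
Your proposal is correct and follows essentially the same route as the paper: apply the five-term relation to $f$ against a degree-$2$ auxiliary function whose zero/pole/level-$1$ divisors are matched to $[\alpha_1]+[\alpha_2]$, $[\beta_1]+[\beta_2]$, $[\gamma_1]+[\gamma_2]$ via the failing equality of sums, and then bound the degrees of the three quotient terms by counting cancellations (your third case, with $(y)_0=[\alpha_1]+[\alpha_2]$, $(y)_\infty=[\gamma_1]+[q]$ and $1-y$ vanishing on $[\beta_1]+[\beta_2]$, is exactly the paper's function $h$). The only difference is cosmetic: you swap the roles of $x$ and $y$ in the relation and write out all three cases with a uniform degree formula, where the paper treats one case explicitly and calls the others similar.
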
 

\begin{proof}Let us consider the case when $\alpha_1+\alpha_2=\beta_1+\beta_2$, the other cases are similar.
There is a function $h$ of degree $2$ on $E$ taking the value zero at the points $\alpha_1,\alpha_2$ the value $1$ at the points $\beta_1,\beta_2$ and the value infinity at the point $\gamma_1$. Let us substitute $x=h,y=f$ into the formula (\ref{five-relation}):
\begin{equation}
\label{eq:five_term2}
    [h]-[f]+[f/h]+\left[(1-h)/(1-f)\right]-\left[\dfrac {1-h^{-1}}{1-f^{-1}}\right].
\end{equation}
Let us prove that degree of the function $f/h$ is $\leq n$. The divisors of the functions $f$ and $h$ have the following forms:
$$(f)=\sum\limits_{i=1}^{n+1}([\alpha_i-\gamma_i]), (h)=[\alpha_1]+[\alpha_2]-[\gamma_1]-[\gamma'],$$

for some points $\alpha_i,\gamma_i,\gamma'\in E, i=3\dots n+1$. We have
\begin{multline}
   (f/h)=(f)-(h)=\sum\limits_{i=1}^{n+1}([\alpha_i-\gamma_i])-([\alpha_1]+[\alpha_2]-[\gamma_1]-[\gamma'])=\\=\left([\gamma']+\sum\limits_{i=3}^{n+1}[\alpha_i]\right)-\sum\limits_{i=2}^{n+1}[\gamma_i]. 
\end{multline}

So degree of the function $f/h$ is not higher than $n$. The cases of the functions $(1-f)/(1-h),(1-f^{-1})/(1-h^{-1})$ are similar. Therefore degree of all but the second term of the relation (\ref{eq:five_term2}) are $\leq n$. So $[f]\in \mc F_n B_2(K)$.

\end{proof}

The proof of Proposition \ref{prop:sp}.
Let $a\in k\backslash\{0,1\}$. If we substitute $x=a,y=f$ into the formula (\ref{five-relation}) we get the following relation in the group $B_2(K)$
\begin{equation}
\label{formula:abel_a}
[a]-[f]+[f/a]+\left[(1-a)/(1-f)\right]-\left[\dfrac {1-a^{-1}}{1-f^{-1}}\right].
\end{equation}
Let $f_{1,a}=f/a,f_{2,a}=(1-a)/(1-f),f_{3,a}=(1-a^{-1})/(1-f^{-1})$. Since the field $k$ is infinite it is enough to prove that for any $j\in\{1,2,3\}$ and all but finite values of $a$ the function $f_{j,a}$ is either a generic or the element $[f_{j,a}]$ lies in the subspace $\mc F_n B_2(K)$.

Let $\alpha_1,\alpha_2$ and $\gamma_1,\gamma_2$ be as in the first condition of Definition \ref{def:general} for the function $f$ . Let $A_0\subset \mb P^1$ be the set of critical values of the function $f$, and $A=A_0\cup\{f(\alpha_1+\alpha_2-\gamma_1)\}\cup\{0,\infty\}$. Let $a\not\in A$. The set $f^{-1}(a)$ has precisely $n+1$ elements. Let $\beta_1,\beta_2\in f^{-1}(a)$ be two different elements. It easy to see that for the function $f/a$ the first, the second and the fourth conditions of Definition \ref{def:general} hold. If the third condition is also holds then $f/a$ is a generic rational function. If it does not hold then according to Lemma \ref{lemma:sp} the element $[f/a]$ lies in the subgroup $\mc F_n B_2(K)$.

The cases of the functions $f_{2,a},f_{3,a}$ are similar. The proposition is proved.

\section{The Decreasing of Degree in the Case of  a Generic Function}
Let us denote by $\wp$ the function of degree $2$ on $E$, satisfying $\wp(z)=z^{-2}+o(z)$ at $z\to 0$. It is easy to see that for arbitrary points $\alpha, \beta\in E, \alpha\ne \beta$, the function $\wp(z-\alpha)-\wp(z-\beta)$ has the following divisor:
\begin{equation}
\label{formula_div_wp}
(\wp(z-\alpha)-\wp(z-\beta))=\left(\sum\limits_{2x\in \alpha+\beta}[x]\right)-2[\alpha]-2[\beta].
\end{equation}

We recall that the cross ratio of four points on $\mb P^1$ is defined by the following formula $$[a,b,c,d]=\dfrac {a-c}{b-c}:\dfrac{a-d}{b-d}=\dfrac {(a-c)(b-d)}{(b-c)(a-d)}.$$

Let $\alpha,\beta,\gamma,\delta$ be a four mutually different points on $E$. We define the function $h_{\alpha,\beta,\gamma,\delta}$ by the following formula:

$$h_{\alpha,\beta,\gamma,\delta}(z)=[\wp(z-\alpha),\wp(z-\beta),\wp(z-\gamma),\wp(z-\delta)].$$

It follows from the formula (\ref{formula_div_wp}) that:
\begin{equation}
\label{formula_div_h}
(h_{\alpha,\beta,\gamma,\delta})=\sum_{2x\in \{\alpha+\gamma,\beta+\delta\}}[x]-\sum_{2x\in \{\alpha+\delta,\beta+\gamma\}}[x].
\end{equation}
Since the points $\alpha,\beta,\gamma,\delta$ are mutually different degree of the function $h$ is equal to $8$. The group $E(2):=\{z\in E| 2z=0\}$ acts on $E$ by translations. Hence the divisors of the functions $h_{\alpha,\beta,\gamma,\delta}$ and $1-h_{\alpha,\beta,\gamma,\delta}=h_{\alpha,\gamma,\beta,\delta}$ are invariant under the group $E[2]$. Therefore the function $h_{\alpha,\beta,\gamma,\delta}$ is also invariant under the group $E[2]$. 
We need the following
\begin{lemma}
\label{lemma_about_h}
For any $m\in \mb P^1\bs\{0,1,\infty\}$ there is $\mu\in E$, such that the following conditions hold:
\begin{enumerate}
\item $h_{\alpha,\beta,\gamma,\delta}(\mu)=m$,
\item $\mu \not \in \{\alpha,\beta\}$,
\item $2\mu \not \in\{\alpha+\gamma,\beta+\delta, \alpha+\delta,\beta+\gamma,\alpha+\beta,\delta+\gamma\}$.
\end{enumerate}
\end{lemma}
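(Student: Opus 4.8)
The plan is to treat $h = h_{\alpha,\beta,\gamma,\delta}$ as a degree-$8$ map $E \to \mathbb{P}^1$ and show that the fiber $h^{-1}(m)$ is large enough that the three excluded conditions can only remove finitely many of its points — in fact, fewer than $8$ — so a valid $\mu$ survives. First I would fix $m \in \mathbb{P}^1 \setminus \{0,1,\infty\}$ and consider the set $h^{-1}(m)$. Since $\deg h = 8$, this set has $8$ points counted with multiplicity; if $m$ is not a critical value of $h$ it has exactly $8$ distinct points, and I will first dispose of the (finitely many) critical values separately or observe that $m$ ranges over an infinite set so it suffices to handle generic $m$ — but actually the statement is for \emph{all} $m \notin \{0,1,\infty\}$, so I cannot discard critical values, and I will instead argue directly that the excluded loci are each small.

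The key observation is that conditions (2) and (3) describe \emph{fixed finite subsets} of $E$ independent of $m$: condition (2) excludes the two points $\alpha, \beta$, and condition (3) excludes the finitely many points $\mu$ with $2\mu$ in a fixed $6$-element subset of $E$, i.e.\ at most $6 \cdot 4 = 24$ points. So I want to show $|h^{-1}(m)| $ is, in an appropriate sense, not entirely contained in this fixed bad set $S$. The cleanest route: note that $h^{-1}(m) \subseteq S$ for infinitely many $m$ is impossible because $S$ is finite while the fibers $h^{-1}(m)$ for distinct $m$ are disjoint; hence only finitely many $m$ have $h^{-1}(m) \subseteq S$. This handles all but finitely many $m$ at once, but again not \emph{all} $m$. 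To get every $m$, I would instead check the bad points one at a time: for each of the (at most $26$) points $p \in S$, the condition "$p \in h^{-1}(m)$'' forces $m = h(p)$, a single value of $m$ (or is undefined if $p$ is a zero/pole of $h$ or $h-1$, i.e.\ $p$ in the support of the divisor \eqref{formula_div_h}, in which case $h(p) \in \{0,1,\infty\}$ and the constraint is vacuous for our $m$). Therefore the set of $m \in \mathbb{P}^1 \setminus\{0,1,\infty\}$ for which \emph{some} point of $S$ lies in $h^{-1}(m)$ is finite, and for $m$ outside this finite set together with $\{0,1,\infty\}$ every point of $h^{-1}(m)$ automatically satisfies (2) and (3).

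This still leaves the genuinely universal claim. To cover the remaining finitely many exceptional $m$, the point is that $h^{-1}(m)$ has $8$ points with multiplicity, whereas at most how many of them can be bad? A bad point $\mu$ (failing (2) or (3)) must lie in $S$, but if $\mu \in S$ then $h(\mu)$ is a single determined value; so $h^{-1}(m)$ can meet $S$ only if $m$ is one of finitely many values, and even then $h^{-1}(m) \cap S$ has at most $|S|$ points — but I need $|h^{-1}(m) \setminus S| \geq 1$, i.e.\ $h^{-1}(m) \not\subseteq S$. Since $\deg h = 8$ and the multiplicities are controlled by the critical points of $h$, the size $|h^{-1}(m)|$ as a set is at least $8 / (\text{max multiplicity})$; I would show the maximal multiplicity of $h$ is small — here the $E[2]$-invariance of $h$ is decisive: $h$ factors through the degree-$4$ quotient $E \to E/E[2] \cong E'$ as a degree-$2$ map $\bar h$, so every fiber of $h$ is a union of $E[2]$-orbits, hence $|h^{-1}(m)|$ is either $4$ or $8$ and in particular is at least $4$. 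Since $|S| $ intersected with any single $E[2]$-orbit that lies in a fiber is constrained, and $S$ contains at most two full $E[2]$-cosets' worth of points relevant to (3), one checks $h^{-1}(m)$ (a union of one or two $E[2]$-orbits) cannot be swallowed by $S$.

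The main obstacle I anticipate is precisely this last universal step: making sure that for the finitely many exceptional $m$ the fiber $h^{-1}(m)$ is not entirely contained in the fixed bad set $S$. The clean way around it is to exploit the $E[2]$-invariance established just before the lemma: $h^{-1}(m)$ is a union of $E[2]$-cosets, so it has at least $4$ elements, and one then shows the bad set $S$ meets each such coset in a bounded number of points strictly less than $4$ — or more simply, that $S$ cannot contain a full $E[2]$-coset lying in a fiber of $h$ over a point of $\mathbb{P}^1 \setminus \{0,1,\infty\}$, because the points of $S$ coming from condition (3) are characterized by $2\mu$ lying in the support of the divisor of $h$ or of $1-h$, which are exactly the fibers over $0,1,\infty$. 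Assembling these observations gives a $\mu$ satisfying all three conditions for every admissible $m$.
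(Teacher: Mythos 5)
Your final assembled argument is correct and is essentially the paper's own proof: the fiber $h^{-1}_{\alpha,\beta,\gamma,\delta}(m)$ is non-empty and $E[2]$-invariant, hence contains at least four points, only $\alpha,\beta$ are excluded by condition (2), and condition (3) is automatic because the points with $2\mu$ in the six-element set are exactly the support of the divisors of $h_{\alpha,\beta,\gamma,\delta}$ and $1-h_{\alpha,\beta,\gamma,\delta}$, i.e.\ lie in the fibers over $0,1,\infty$, which are disjoint from the fiber over $m$. The detours about critical values, multiplicities, and counting the bad set $S$ are unnecessary once this is observed, but they do not affect correctness.
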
 
\begin{proof}
As a map between a two projective curves the map $h_{\alpha,\beta,\gamma,\delta}$ is surjective. Therefore the set $f^{-1}(m)$ is non-empty. 
Since the function $h_{\alpha,\beta,\gamma,\delta}$ is invariant under the group $E[2]$, the set $h_{\alpha,\beta,\gamma,\delta}^{-1}(m)$ is also invariant under the group $E[2]$. So there are at least $4$ points satisfying the first condition of the lemma. We can pick from them a point $\mu$ different from $\alpha$ and $\beta$. So the first and the second conditions for the point $\mu$ holds.  Since the point $m$ do not lie in the set $\{0,1,\infty\}$ the point $\mu$ does not lie on the divisors of the functions $h_{\alpha,\beta,\gamma,\delta},1-h_{\alpha,\beta,\gamma,\delta}=h_{\alpha,\gamma,\beta,\delta}$. Now the third statement of the lemma follows from the formula (\ref{formula_div_h}).
\end{proof}

\begin{proposition}
\label{prop:degree_2}
Let $\alpha,\beta,\gamma,\delta$ be four mutually different points on $E$ and let $a,b,c,d$ be four mutually different points on $\mb P^1$. Then there is a function $f$ of degree $2$ on $E$ such that $f(\alpha)=a, f(\beta)=b, f(\gamma)=c, f(\delta)=d$.
\end{proposition}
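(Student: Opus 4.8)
The plan is to realize $f$ as a composition $E \xrightarrow{\phi} \mb P^1 \xrightarrow{\psi} \mb P^1$, where $\phi$ has degree $2$ and $\psi$ is a Möbius transformation. Since degree-$2$ maps $E\to\mb P^1$ are (up to post-composition with $\mathrm{PGL}_2$) exactly the quotient maps by the hyperelliptic involution, a degree-$2$ function is the same thing as a function of the form $z\mapsto \wp(z-\lambda)$ for some $\lambda\in E$, post-composed by a Möbius map. So the first step is to look for $\lambda\in E$ and a Möbius map $\psi$ with $\psi(\wp(\alpha-\lambda))=a$, $\psi(\wp(\beta-\lambda))=b$, $\psi(\wp(\gamma-\lambda))=c$, $\psi(\wp(\delta-\lambda))=d$. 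A Möbius map is determined by three conditions and preserves the cross-ratio, so such a $\psi$ exists and is unique \emph{provided} the four points $\wp(\alpha-\lambda),\wp(\beta-\lambda),\wp(\gamma-\lambda),\wp(\delta-\lambda)$ are mutually distinct and their cross-ratio equals $[a,b,c,d]$.

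The mutual distinctness is the easy part: $\wp(\alpha-\lambda)=\wp(\beta-\lambda)$ holds iff $(\alpha-\lambda)=\pm(\beta-\lambda)$, i.e. iff $\alpha=\beta$ (excluded) or $2\lambda=\alpha+\beta$; so for all but finitely many $\lambda$ all six coincidences among the four values are avoided. The real content is the cross-ratio condition. Define
$$
g(\lambda) = [\wp(z-\alpha),\wp(z-\beta),\wp(z-\gamma),\wp(z-\delta)]\big|_{z=\lambda} = h_{\alpha,\beta,\gamma,\delta}(\lambda),
$$
using the notation already introduced; we must solve $h_{\alpha,\beta,\gamma,\delta}(\lambda)=[a,b,c,d]$ for $\lambda\in E$. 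Here the main obstacle splits into two cases. If $[a,b,c,d]\notin\{0,1,\infty\}$, then Lemma \ref{lemma_about_h} applied with $m=[a,b,c,d]$ produces a $\mu\in E$ with $h_{\alpha,\beta,\gamma,\delta}(\mu)=m$ and, crucially, $2\mu\notin\{\alpha+\gamma,\beta+\delta,\alpha+\delta,\beta+\gamma,\alpha+\beta,\delta+\gamma\}$ and $\mu\notin\{\alpha,\beta\}$; conditions (2) and (3) of that lemma are exactly what is needed to guarantee $\mu\notin\{\alpha,\beta,\gamma,\delta\}$ and that $\wp(\alpha-\mu),\dots,\wp(\delta-\mu)$ are pairwise distinct, so the Möbius map $\psi$ exists and $f=\psi\circ(\wp(\,\cdot-\mu))$ works.

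It remains to handle the degenerate cross-ratio values $[a,b,c,d]\in\{0,1,\infty\}$; but since $a,b,c,d$ are assumed mutually distinct, the cross-ratio of four distinct points of $\mb P^1$ can never be $0$, $1$, or $\infty$, so this case does not arise. (More precisely, $[a,b,c,d]=0$ forces $a=c$, $=1$ forces $a=d$ or $b=c$, $=\infty$ forces $a=d$; all excluded.) Hence the single application of Lemma \ref{lemma_about_h} suffices, and the construction $f=\psi\circ\wp(\,\cdot-\mu)$, a function of degree $2$ with the prescribed values, completes the proof. The one point to write carefully is the verification that the three explicit conditions in Lemma \ref{lemma_about_h} translate, via formula \eqref{formula_div_h}, into the distinctness of the four $\wp$-values and of $\mu$ from $\{\alpha,\beta,\gamma,\delta\}$ — this is a short unwinding of when $\wp(u)=\wp(v)$ and when $\wp(z-\alpha)-\wp(z-\beta)$ vanishes at $\mu$.
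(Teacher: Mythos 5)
Your proposal is correct and takes essentially the same route as the paper: both build $f$ as a M\"obius transform of $\wp(z-\mu)$ with $\mu$ supplied by Lemma \ref{lemma_about_h} applied to $m=[a,b,c,d]$, the paper merely making the M\"obius step explicit via the computation $f(\gamma)/f(\delta)=h_{\alpha,\beta,\gamma,\delta}(\mu)=m$ and a normalization. (One small inaccuracy: conditions (2)--(3) of the lemma do not actually rule out $\mu\in\{\gamma,\delta\}$, but this is harmless, since what the cross-ratio/M\"obius argument really needs is only the pairwise distinctness of the four values $\wp(\alpha-\mu),\dots,\wp(\delta-\mu)$ in $\mb P^1$, and that does follow from $2\mu$ avoiding the six pairwise sums.)
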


\begin{proof}
Let $m$ be the cross-relation of points $a,b,c,d$ and $\mu$ be the point given by Lemma \ref{lemma_about_h}. Let us define the function $f$ by the following formula
$$f(z)=\dfrac {\wp(z-\mu)-\wp(\alpha-\mu)}{\wp(z-\mu)-\wp(\beta-\mu)}.$$
The divisor of this function is equal to $(f)=[\alpha]+[2\mu-\alpha]-[\beta]-[2\mu-\beta]$.
It follows from the statement of the previous lemma that the function $f$ satisfies the following two conditions:
\begin{enumerate}
    \item The degree of $f$ is equal to $2$,
    \item $f(\gamma),f(\delta)\not \in \{0,\infty\}$.
\end{enumerate}
We have:
$$\dfrac{f(\gamma)}{f(\delta)}=\dfrac {\wp(\gamma-\mu)-\wp(\alpha-\mu)}{\wp(\gamma-\mu)-\wp(\beta-\mu)}\cdot \dfrac {\wp(\delta-\mu)-\wp(\beta-\mu)}{\wp(\delta-\mu)-\wp(\alpha-\mu)}=h_{\alpha,\beta,\gamma,\delta}(\mu)=m.$$

So the function $\tilde f$ given by the formula $\tilde f(z)=f(z)/f(\delta)$ satisfies $\tilde f(\alpha)=0, \tilde f(\gamma)=m,\tilde f(\delta)=1,\tilde f(\beta)=\infty$.
Since $[0,m,1,\infty]=[a,c,d,b]$ there is an element $g$ of the group $PSL_2(k)$ transforming the points $0,m,1,\infty$ to the points $a,c,d,b$. It is easy to see that the function $g(\tilde f(z))$ satisfies the statement of the proposition.
\end{proof}
We have the following
\begin{corollary}
\label{cor:degree_3}
Let $\alpha_1,\alpha_2,\beta_1,\beta_2,\gamma_1,\gamma_2$ be a points on $E$, such that the following conditions hold
\begin{enumerate}
\item The sets $\{\alpha_i\}_{i=1,2},\{\beta_i\}_{i=1,2},\{\gamma_i\}_{i=1,2}$ do not intersect,
\item The points $\beta_1$ and $\beta_2$ are different,
\item The points $\alpha_1+\alpha_2-\gamma_1-\beta_1,\alpha_1+\alpha_2-\gamma_1-\beta_2$ are non-zero,
\item The points $\alpha_1+\alpha_2,\beta_1+\beta_2$ and $\gamma_1+\gamma_2$ are mutually different.
\end{enumerate} 
Then there is a rational function $g$ on $E$ satisfying $g(\beta_1)=g(\beta_2)=1$ and one of following statements hold:
\begin{enumerate}
\item Degree of $g$ is equal to $3$ and its divisor is equal to $[\alpha_1]+[\alpha_2]+[\alpha']-[\gamma_1]-[\gamma_2]-[\gamma']$, where $\alpha',\gamma'$ are some points on $E$.
\item Degree of $g$ is equal to $2$ and its divisor is equal to $[\alpha_1]+[\alpha_2]-[\gamma_j]-[\gamma']$, where $j\in \{1,2\}$ and $\gamma'$ is some point on $E$.
\item Degree of $g$ is equal to $2$ and its divisor is equal $[\alpha_j]+[\alpha']-[\gamma_1]-[\gamma_2]$, where $j\in \{1,2\}$ and $\alpha'$ is some point on $E$.
\end{enumerate}
 1. is "generic" case and 2. and 3. are its degenerations.
\end{corollary}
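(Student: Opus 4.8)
The plan is to realise $g$ as a product $f_1f_2$ of two functions of degree $2$: the first factor is chosen by hand so as to carry the prescribed zeros at $\alpha_1,\alpha_2$ together with one of the prescribed poles, at $\gamma_1$; the second factor is extracted from Proposition \ref{prop:degree_2} so as to cancel the superfluous pole of $f_1$, create the second prescribed pole at $\gamma_2$, and force the product to take the value $1$ at $\beta_1$ and at $\beta_2$. A degree-$2$ function cannot have zeros at $\alpha_1,\alpha_2$ and poles at $\gamma_1,\gamma_2$ simultaneously, because $\alpha_1+\alpha_2\ne\gamma_1+\gamma_2$, so degree $3$ is forced in the generic situation, which is exactly why a product of two quadratic functions (with one cancellation) is the natural ansatz.

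First I would set $\eta:=\alpha_1+\alpha_2-\gamma_1$ and, by Abel's theorem, fix a function $f_1$ with divisor $[\alpha_1]+[\alpha_2]-[\gamma_1]-[\eta]$ (reading a double zero if $\alpha_1=\alpha_2$, or a double pole if $\gamma_1=\eta$). Conditions (1)--(4) give $\eta\notin\{\alpha_1,\alpha_2,\beta_1,\beta_2,\gamma_2\}$: for instance $\eta=\gamma_2$ would mean $\alpha_1+\alpha_2=\gamma_1+\gamma_2$, contradicting (4), $\eta=\beta_i$ is excluded by (3), and $\eta=\alpha_i$ would give $\alpha_j=\gamma_1$ for $\{i,j\}=\{1,2\}$, contradicting (1). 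Hence $f_1$ has degree exactly $2$ and $f_1(\beta_1),f_1(\beta_2)$ are finite and nonzero. They are moreover distinct: the fibres of the degree-$2$ map $f_1$ are precisely the unordered pairs of points of $E$ summing to $\alpha_1+\alpha_2$, whereas $\beta_1+\beta_2\ne\alpha_1+\alpha_2$ by (4).

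Next I would apply Proposition \ref{prop:degree_2} to the four points $\eta,\gamma_2,\beta_1,\beta_2$ of $E$, which are pairwise distinct by conditions (1)--(4), and to the four points $0,\infty,f_1(\beta_1)^{-1},f_1(\beta_2)^{-1}$ of $\mb P^1$, which are pairwise distinct by the preceding paragraph. This produces a function $f_2$ of degree $2$ with $f_2(\eta)=0$, $f_2(\gamma_2)=\infty$ and $f_2(\beta_i)=f_1(\beta_i)^{-1}$; its divisor then has the form $[\eta]+[\alpha']-[\gamma_2]-[\gamma']$ for some $\alpha',\gamma'\in E$. Setting $g:=f_1f_2$, we get $g(\beta_i)=f_1(\beta_i)f_2(\beta_i)=1$, and since the pole of $f_1$ at $\eta$ cancels the zero of $f_2$ at $\eta$,
$$(g)=[\alpha_1]+[\alpha_2]+[\alpha']-[\gamma_1]-[\gamma_2]-[\gamma'].$$

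If this expression has no further coincidences we are in case (1). Otherwise: $\alpha'=\gamma'$ is impossible, since it would force $\eta=\gamma_2$; a coincidence $\alpha'\in\{\gamma_1,\gamma_2\}$ cancels a $\gamma$ and brings $g$ into the shape of case (2), while $\gamma'\in\{\alpha_1,\alpha_2\}$ cancels an $\alpha$ and brings $g$ into the shape of case (3) (here one uses (1) to see that no further cancellation then takes place, so that $g$ has degree exactly $2$); and a coincidence of the first kind together with one of the second kind cannot both occur, for that would leave $g$ of degree at most $1$, which is impossible on an elliptic curve. This exhausts the possibilities. The only substantive ingredient is Proposition \ref{prop:degree_2}; the rest is bookkeeping — verifying from hypotheses (1)--(4) the distinctness of the four points of $E$ and of the four points of $\mb P^1$ used above, and the short divisor inspection separating the generic case from its two degenerations — and that bookkeeping, rather than any conceptual difficulty, is the main thing to get right.
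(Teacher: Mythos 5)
Your proposal is correct and follows essentially the same route as the paper: build $g$ as a product of a degree-$2$ function with divisor $[\alpha_1]+[\alpha_2]-[\gamma_1]-[\alpha_1+\alpha_2-\gamma_1]$ and a second degree-$2$ function supplied by Proposition \ref{prop:degree_2} at the points $\alpha_1+\alpha_2-\gamma_1,\gamma_2,\beta_1,\beta_2$, then read off the three cases from the resulting divisor. The only (cosmetic) difference is that you prescribe $f_2(\beta_i)=f_1(\beta_i)^{-1}$ directly rather than first normalizing the first factor to take the value $1$ at $\beta_1$, and you spell out the distinctness bookkeeping (e.g.\ $f_1(\beta_1)\ne f_1(\beta_2)$ via the fibres of $f_1$) that the paper leaves implicit.
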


\begin{proof}
It follows from the first condition of the corollary that there is a function $g_1$ of degree $2$  with the divisor is equal to $[\alpha_1]+[\alpha_2]-[\gamma_1]-[\alpha_1+\alpha_2-\gamma_1]$. From the first and the third conditions it follows that $g_1(\beta_1)\not \in \{0,\infty\}$. Let us denote by $\tilde f_1$ the function $g_1/g_1(\beta_1)$. From the conditions of the corollary it follows that $\tilde g_1(\beta_2)\not\in\{ 0,1,\infty\}$. According to Proposition \ref{prop:degree_2} there is a function $g_2$ of degree $2$ on $E$ taking the values $0,\infty, 1, g_1(\beta_2)^{-1}$ at the points $\alpha_1+\alpha_2-\gamma_1, \gamma_2,\beta_1,\beta_2$. Let $g:=\tilde g_1g_2$. By the construction of the function $g$ we have $g(\beta_1)=g(\beta_2)=1$.  Denote by $[\alpha_1+\alpha_2-\gamma_1]+[s]-[\gamma_2]-[t]$ the divisor of the function $g_2$ for some $s,t\in E$. We have
$$(g)=[\alpha_1]+[\alpha_2]+[s]-[\gamma_1]-[\gamma_2]-[t].$$
There are three cases:
\begin{enumerate}
    \item $s\not \in \{\gamma_1,\gamma_2\}, t\not \in \{\alpha_1,\alpha_2\}$. In this case degree of the function $g$ is equal to $3$ and the first case of the proposition holds.
    \item $s\in \{\gamma_1,\gamma_2\}$. In this case $t\not \in \{\alpha_1,\alpha_2\}$ and so the second case of the proposition holds.
    \item $t\in \{\alpha_1,\alpha_2\}$. In this case $s\not\in \{\gamma_1,\gamma_2\}$ and so the third case of the proposition holds.
    
\end{enumerate}
\end{proof}

Now we can give the proof of Proposition \ref{prop:ge}. Let $f$ be a generic rational function of degree $n+1$ on $E$. Denote by $\alpha_1,\alpha_2,\beta_1,\beta_2,\gamma_1,\gamma_2$ the corresponding points from Definition \ref{def:general}. They are satisfying the conditions of Corollary \ref{cor:degree_3}. Let $g$ be a function satisfying one of the statements of Corollary \ref{cor:degree_3}. Let us substitute $x=g,y=f$ into the relation \eqref{five-relation}
$$[g]-[f]+[f/g]+\left[(1-g)/(1-f)\right]-[(1-g^{-1})/((1-f^{-1}))].$$
Similarly to the proof of Lemma \ref{lemma:sp} it is not difficult to show that all but the second term has degree $\leq n$. So $[f]\in\mc F_n B_2(K)$.

\bibliographystyle{spmpsci}      
\bibliography{mylib} 

Faculty of Mathematics, National Research University Higher School of Ecnomics, Russian Federation, Usacheva str., 6, Moscow, 119048; HSE-Skoltech International Laboratory of Representation
Theory and Mathematical Physics, Usacheva str., 6, Moscow, 119048.

E-mail address: {\it vbolbachan@gmail.com}
\end{document}